\newcommand{\BN}{\mathbb{N}}
\newcommand{\CB}{\mathcal {B}}
\newcommand{\CD}{\mathcal {D}}
\newcommand{\CF}{\mathcal {F}}
\newcommand{\CG}{\mathcal {G}}
\newcommand{\CH}{\mathcal {H}}
\newcommand{\CR}{\mathcal {R}}
\newcommand{\CS}{\mathcal {S}}
\newcommand{\bU}{\bar{U}}
\newcommand{\bV}{\bar{V}}
\newcommand{\Aut}{\mathrm{Aut}}
\newcommand{\FAut}{\mathrm{FAut}}
\newcommand{\Sym}{\mathrm{Sym}}
\newcommand{\FSym}{\mathrm{FSym}}
\newtheorem{defin}{{\bf Definition}}[section]
\newtheorem{obs}[defin]{{\bf Observation}}
\newtheorem{lem}[defin]{{\bf Lemma}}
\newtheorem{prop}[defin]{{\bf Proposition}}
\newtheorem{cor}[defin]{{\bf Corollary}}
\newtheorem{claim}{\noindent {\bf Claim}}
\begin{document}

\title{Overgroups of the Automorphism Group of the Rado Graph}

\author[P.Cameron]{Peter Cameron}
\address{School of Mathematical Sciences, Queen Mary, University of London \\ London E1 4NS, UK}
\email {p.j.cameron@qmul.ac.uk} 
\author[C.Laflamme] {Claude Laflamme*}
\thanks{*Supported by NSERC of Canada Grant \# 690404}
\address{University of Calgary, Department of Mathematics and Statistics \\ Calgary, Alberta, Canada T2N 1N4} 
\email {laflamme@ucalgary.ca} 
\author [M.Pouzet]{Maurice Pouzet}
\address{ICJ, Math\'ematiques, Universit\'e Claude-Bernard Lyon1, 43 Bd. 11 Novembre 1918
F$69622$ Villeurbanne cedex, France and University of Calgary,
Department of Mathematics and Statistics, Calgary, Alberta, Canada T2N
1N4}
\email{pouzet@univ-lyon1.fr }
\author[S.Tarzi]{Sam Tarzi}
\address{School of Mathematical Sciences, Queen Mary, University of London \\ London E1 4NS, UK}
\email{s.tarzi@qmul.ac.uk} 
\author[R.Woodrow] {Robert Woodrow}
\address{University of Calgary, Department of Mathematics and Statistics \\ Calgary, Alberta, Canada T2N 1N4} 
\email {woodrow@ucalgary.ca} 
\subjclass{Primary 05C80, 05C55, Secondary 05C63, 05C65 }

\date{May 4, 2012}

\begin{abstract}
We are interested in overgroups of the automorphism group of the Rado
graph. One class of such overgroups is completely understood; this is
the class of reducts.  In this article we tie recent work on various
other natural overgroups, in particular establishing group connections
between them and the reducts.
\end{abstract}

\maketitle

\section{Introduction}

The Rado Graph $\CR$ is the countable universal homogeneous graph: it
is the unique (up to isomorphism) countable graph with the defining property
that for every finite disjoint subsets of vertices $A$ and $B$ there
is a vertex adjacent to all vertices in $A$ and not adjacent to any of
the vertices in $B$.

We are interested in overgroups of its automorphism group $\Aut(\CR)$
in $\Sym(\CR)$, the symmetric group on the vertex set of $\CR$.  One
class of overgroups of $\Aut(\CR)$ is completely understood; this is
the class of reducts, or automorphism groups of relational structures
definable from $\CR$ without parameters.  Equivalently, this is the
class of subgroups of $Sym(\CR)$ containing $Aut(\CR)$
which are closed with respect to the product topology. According to a
theorem of Thomas \cite{T1}, there are just five reducts of
$\Aut(\CR)$:
\begin{itemize}
\item $\Aut(\CR)$; 
\item $\CD(\CR)$, the group of dualities (automorphisms and anti-automor\-phisms) of $\CR$; 
\item $\CS(\CR)$, the group of switching automorphisms of $\CR$ (see below);
\item $\CB(\CR) = \CD(\CR).\CS(\CR)$ (the big group);
\item $Sym(\CR)$, the full symmetric group.
\end{itemize}

Given a set $X$ of vertices in a graph $G$, we denote by $\sigma_X(G)$
the {\em switching} operation of changing all adjacencies between $X$
and its complement in $G$, leaving those within or outside $X$
unchanged, thus yielding a new graph.  Now a switching automorphism of
$G$ is an isomorphism which maps $G$ to $\sigma_X(G)$ for some $X$,
and $\CS(G)$ is the group of switching automorphisms. Thus the
interesting question is often for which subset $X$ is $\sigma_X(G)$
isomorphic $G$, and for this reason will sometimes abuse terminology
and may call $\sigma_X(G)$ a switching automorphism. 

Thomas also showed (see \cite{T2}, and also the work of Bodirsky and
Pinsker \cite{BP}) that the group $\CS(\CR)$ can also be understood as
the automorphism group of the 3-regular hypergraph whose edges are
those 3-element subsets containg an odd number of edges. Similarly,
$\CD(\CR)$ is the automorphism group of the 4-regular hypergraph whose
edges are those 4-element subsets containg an odd number of edges, and
$\CB(\CR)$ is the automorphism group of the 5-regular hypergraph whose
edges are those 5-element subsets containg an odd number of edges.

One can see that $\CG$ is any subgroup of $\Sym(\CR)$, then
$\CG.\FSym(\CR)$ (the group generated by the union of $\CG$ and
$\FSym(\CR)$, the group of all finitary permutations on $\CR$) is a
subgroup of $\Sym(\CR)$ containing $\CG$ and highly transitive. The
reducts $\CD(\CR)$ and $\CS(\CR)$ however are 2-transitive but not
3-transitive, while $\CB(\CR)$ is 3-transitive but not
4-transitive. On the other hand we have the following.

\begin{lem} \label{lem:highlytransitive}
Any overgroup of $\Aut(\CR)$ which is not contained in $\CB(\CR)$ is highly
transitive. 
\end{lem}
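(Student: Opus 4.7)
The plan is to reduce the statement to Thomas's classification of reducts via a simple topological argument. Let $\CG$ be an overgroup of $\Aut(\CR)$ in $\Sym(\CR)$ with $\CG \not\subseteq \CB(\CR)$. I would consider the closure $\overline{\CG}$ of $\CG$ in the product topology on $\Sym(\CR)$. Since the product topology is compatible with the group operations in the relevant sense and the closure of a subgroup is a subgroup, $\overline{\CG}$ is a closed overgroup of $\Aut(\CR)$, hence a reduct in the sense described above.

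Next I would use that $\CB(\CR)$, being itself a reduct, is closed in the product topology. Combined with $\CG \not\subseteq \CB(\CR)$, this gives $\overline{\CG} \not\subseteq \CB(\CR)$, because any element of $\CG \setminus \CB(\CR)$ lies in $\overline{\CG} \setminus \CB(\CR)$. Invoking Thomas's theorem, $\overline{\CG}$ must be one of $\Aut(\CR)$, $\CD(\CR)$, $\CS(\CR)$, $\CB(\CR)$, or $\Sym(\CR)$; the first four are all contained in $\CB(\CR)$, so the only remaining possibility is $\overline{\CG} = \Sym(\CR)$.

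Finally I would convert density into high transitivity by a direct neighborhood argument: given distinct vertices $a_1,\dots,a_n$ and distinct vertices $b_1,\dots,b_n$, the full symmetric group $\Sym(\CR)$ acts $n$-transitively, so there is $h \in \Sym(\CR)$ with $h(a_i)=b_i$ for all $i$. The basic open set $\{g \in \Sym(\CR) : g(a_i) = b_i \text{ for } i=1,\dots,n\}$ is a neighborhood of $h$ in the product topology, hence meets $\CG$ since $\overline{\CG} = \Sym(\CR)$. Any such witness in $\CG$ maps the first $n$-tuple to the second, establishing $n$-transitivity for every $n$.

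There is no real obstacle here; the substance of the lemma is entirely absorbed by Thomas's classification, and the only care needed is the standard observation that $\CB(\CR)$ is closed (so that non-containment passes to the closure) and that density in the product topology forces $n$-transitivity for all $n$.
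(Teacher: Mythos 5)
Your proof is correct and follows essentially the same route as the paper: pass to the closure $\overline{\CG}$, use Thomas's classification to conclude $\overline{\CG}=\Sym(\CR)$, and deduce high transitivity from density (the paper phrases this as $\CG$ and $\overline{\CG}$ having the same orbits on $n$-tuples). The remark about $\CB(\CR)$ being closed is not even needed, since any element of $\CG\setminus\CB(\CR)$ already lies in $\overline{\CG}\setminus\CB(\CR)$.
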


\begin{proof}
Let $G$ with $\Aut(\CR)\leq G \not\leq \CB(\CR)$, and let $\overline G$ be
the closure of $G$ in $\Sym(\CR)$. Since $\overline G\not \leq
\CB(\CR)$, we have $\overline G= \Sym(\CR)$ by Thomas' theorem. Since
$G$ and $\overline G$ have the same orbits of $n$-uples, $G$ is highly
transitive.

\end{proof}

Now for a bit of notation. With the understanding that $\CR$ is the
only graph under consideration here, we write $v \sim w$ when $v$ and
$w$ are adjacent (in $\CR$), $\CR(v)$ for the set of vertices adjacent
to $v$ (the neighbourhood of $v$), and will use $\CR^c(v)$ for $\CR
\setminus \CR(v)$ (note that $v \in \CR^c(v)$).  We say that a
permutation $g$ changes the adjacency of $v$ and $w$ if $(v \sim w)
\Leftrightarrow (v^g \not \sim w^g)$. We say that $g$ changes finitely many adjacencies at $v$
if there are only finitely many points $w$ for which $g$ changes the
adjacency of $v$ and $w$.

Given two groups $G_1, G_2$ contained in a group $H$, we write
$G_1.G_2$ for the subgroup of $H$ generated by their union. 

In Section \ref{sec:otheroverg}, we present various other natural overgroups
and tie recent work and in particular establish group connections
between them and the reducts.

\section{Other Overgroups of $\Aut(\CR)$}\label{sec:otheroverg}

Cameron and Tarzi in \cite{CT} have studied the following overgroups
of $\CR$.

\begin{enumerate}[a)]
\item $\Aut_1(\CR)$, the group of permutations which change only a 
finite number of adjacencies;

\item $\Aut_2(\CR)$, the group of permutations which change only a
finite number of adjacencies at each vertex;

\item $\Aut_3(\CR)$, the group of permutations which change only a finite 
number of adjacencies at all but finitely many vertices;

\item $\Aut(\CF_\CR)$, where $\CF_\CR$ is the neighbourhood filter of $\CR$, 
the filter generated by the neighbourhoods of vertices of  $\CR$.  
\end{enumerate}

One shows that all these sets of permutations really are groups, as
claimed. For $\Aut_i(\CG)$, this is because if $C(g)$ denotes the set
of pairs $\{v,w\}$ whose adjacency is changed by $g$, then one
verifies that $C(g^{-1}) = C(g)^{g^{-1}}$ and $C(gh) \subseteq C(g)
\cup C(h)^{g^{-1}}$.

The main facts known about these groups are:

\begin{prop}\cite{CT}\label{prop:mainfacts}
\begin{enumerate}[a)]
\item $\Aut(\CR) < \Aut_1(\CR) < \Aut_2(\CR) < \Aut_3(\CR)$; 
\item $\Aut_2(\CR) \leq \Aut(\CF_\CR)$, but $\Aut_3(\CR)$ and
$\Aut(\CF_\CR)$ are {\em incomparable}; 
\item $\FSym(\CR) <  \Aut_3(\CR) \cap \Aut(\CF_\CR)$, but $\FSym(\CR) \cap \Aut_2(\CR) = 1$; 
\item  $\CS(\CR) \not \leq  \Aut(\CF_\CR)$, and $\Aut(\CF_\CR) \cap \CD(\CR) = \Aut(\CF_\CR) \cap \CS(\CR) = \Aut(\CR)$.
\end{enumerate}
\end{prop}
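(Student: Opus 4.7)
The plan rests on three basic tools. First, the extension property of $\CR$: flipping a prescribed set of adjacencies in $\CR$---for instance, any finite set or any infinite matching---yields a graph still satisfying the extension property and hence isomorphic to $\CR$; any isomorphism between them is a vertex permutation whose set of changed adjacencies is precisely the flipped set. Second, the identity
\[
g(\CR(v)) \,\triangle\, \CR(g(v)) \;=\; \{\, g(w) : g \text{ changes the adjacency of } \{v,w\}\,\},
\]
which converts filter-preservation questions at $v$ into counting changed adjacencies at $v$. Third, the observation that $\CR^c(v) \setminus \{v\} \notin \CF_\CR$: if it contained $\CR(u_1) \cap \cdots \cap \CR(u_k)$, then the further intersection with $\CR(v)$ would be both empty (from the containment) and nonempty (by the extension property).

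For part~(a), the chain inclusions are definitional; strictness is witnessed by three permutations from tool~(i): a single-edge flip yields an element of $\Aut_1(\CR) \setminus \Aut(\CR)$; an infinite-matching flip yields one in $\Aut_2(\CR) \setminus \Aut_1(\CR)$; and any transposition $(v\,v')$ lies in $\Aut_3(\CR) \setminus \Aut_2(\CR)$, using that the extension property supplies infinitely many vertices distinguishing $v$ from $v'$---each an adjacency changed at $v$ and at $v'$---while any other vertex is affected only at the two pairs containing $v$ or $v'$.

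For part~(b), the inclusion $\Aut_2(\CR) \leq \Aut(\CF_\CR)$ follows from tool~(ii) combined with the fact that any finite modification of a neighbourhood remains in $\CF_\CR$ (intersect with the neighbourhood of a vertex avoiding the finite difference). For incomparability, a switching automorphism with singleton switching set $X = \{v\}$ lies in $\Aut_3(\CR)$---it changes adjacencies only at the single vertex $g^{-1}(v)$---but sends $\CR(g^{-1}(v))$ to $\CR^c(v) \setminus \{v\}$, which is not in $\CF_\CR$ by tool~(iii); in the opposite direction I would invoke a filter-preserving permutation altering adjacencies at infinitely many vertices, constructed in \cite{CT}. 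Part~(c) runs along the same lines: a finitary permutation $\sigma$ with support $F$ changes, at any vertex outside $F$, only the adjacencies with elements of $F$, so $\FSym(\CR) \leq \Aut_3(\CR)$, and $\sigma(\CR(v))$ differs from $\CR(v)$ only on $F$, giving $\FSym(\CR) \leq \Aut(\CF_\CR)$; strictness follows because $\Aut(\CR) \leq \Aut_3(\CR) \cap \Aut(\CF_\CR)$ and $\CR$ has no twins, so no non-trivial automorphism is finitary; finally $\FSym(\CR) \cap \Aut_2(\CR) = 1$, because any non-trivial finitary $\sigma$ moves some $x$ to $y \neq x$, and the extension property supplies infinitely many vertices distinguishing $x$ and $y$, each contributing an adjacency changed at $x$.

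For part~(d), $\CS(\CR) \not\leq \Aut(\CF_\CR)$ is witnessed by the singleton switching of part~(b). For $\Aut(\CF_\CR) \cap \CD(\CR) = \Aut(\CR)$, any anti-automorphism $g$ sends $\CR(v)$ to $\CR^c(g(v)) \setminus \{g(v)\}$, not in $\CF_\CR$ by tool~(iii). For $\Aut(\CF_\CR) \cap \CS(\CR) = \Aut(\CR)$, take $g : \CR \to \sigma_X(\CR)$ in $\Aut(\CF_\CR)$; computing the neighbourhood of $g(v)$ in $\sigma_X(\CR)$ gives $g(\CR(v)) \cap \CR(g(v)) = \CR(g(v)) \cap X$ when $g(v) \in X$ and $\CR(g(v)) \cap X^c$ otherwise, so intersecting any filter witness $\CR(u_1) \cap \cdots \cap \CR(u_k) \subseteq g(\CR(v))$ with $\CR(g(v))$ forces $X \in \CF_\CR$ in the first case and $X^c \in \CF_\CR$ in the second. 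A proper filter cannot contain both a set and its complement, so one case is vacuous, forcing $X \in \{\emptyset, V\}$; hence $\sigma_X$ is trivial and $g \in \Aut(\CR)$. I expect the main obstacle to be the construction in part~(b) of a filter-preserving permutation lying outside $\Aut_3(\CR)$, since tool~(iii) makes filter preservation quite rigid against natural adjacency modifications.
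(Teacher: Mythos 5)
Nearly all of your argument is correct and matches the paper's own proof: the strictness witnesses in (a), the finite-modification argument for $\Aut_2(\CR)\leq\Aut(\CF_\CR)$, the singleton switching for $\Aut_3(\CR)\not\leq\Aut(\CF_\CR)$ and for $\CS(\CR)\not\leq\Aut(\CF_\CR)$, the finitary-permutation analysis in (c), and the disjointness/proper-filter argument for (d) (your "$X$ and $X^c$ cannot both lie in $\CF_\CR$" is just a rephrasing of the paper's four-fold empty intersection). Your route to strictness of $\FSym(\CR)<\Aut_3(\CR)\cap\Aut(\CF_\CR)$ via $\Aut(\CR)\cap\FSym(\CR)=1$ is a harmless variant of the paper's, which instead quotes $\Aut_2(\CR)$.

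The genuine gap is the direction $\Aut(\CF_\CR)\not\leq\Aut_3(\CR)$ in part (b), which you do not prove but only defer to \cite{CT}; this is precisely the one nontrivial construction in the statement, and your own closing remark concedes it is the main obstacle. The paper handles it as follows: fix a vertex $v$ and let $\CR''$ be obtained from $\CR$ by changing all adjacencies \emph{between non-neighbours of} $v$ (i.e.\ inside $\CR^c(v)$); then $\CR''\cong\CR$, and one may choose an isomorphism $g\colon\CR\to\CR''$ fixing $v$. Since every non-neighbour of $v$ has infinitely many adjacencies changed, $g\notin\Aut_3(\CR)$. On the other hand no adjacency involving a vertex of $\CR(v)$ is altered, so for every $w$ one has $\CR(v)\cap\CR(w)^g=\CR(v)\cap\CR(w^g)$, whence $\CR(w)^g$ contains a member of $\CF_\CR$ and $g\in\Aut(\CF_\CR)$. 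Note that your tool~(iii) does not make such examples impossible: filter preservation only constrains images of neighbourhoods up to containing a finite intersection of neighbourhoods, and the switching above damages only the filter-irrelevant part $\CR^c(v)$. With this construction inserted, your proof is complete and essentially the paper's.
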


\begin{proof}
(a) is clear. \\

(b) For the first part, let $g \in \Aut_2(\CR)$. It suffices to show
that, for any vertex $v$, we have $\CR(v)^g \in \CF_(\CR)$. Now by
assumption, $\CR(v)^g$ differs only finitely from $\CR(v^g)$; let
$\CR(v)^g \setminus \CR(v^g) = \{w_1, . . . , w_n\}$. If we choose $w$
such that $ w_i \not \in \CR(w)$ for each $i$, then we have
\[ \CR(v^g) \cap \CR(w) \subseteq \CR(v)^g ,\]
and we are done.

For the second part, choose a vertex $v$, and consider the graph
$\CR'$ obtained by changing all adjacencies at $v$. Then $\CR' \cong
\CR$. Choose an isomorphism $g$ from $\CR$ to $\CR'$; since $\CR'$ is
vertex-transitive, we can assume that $g$ fixes $v$. So $g$ maps
$\CR(v)$ to $\CR_1(v) = \CR^c(v) \setminus \{v\} $. Clearly $g
\in \Aut_3(\CR)$, since it changes only one adjacency at any point
different from $v$. But if $g \in \Aut(\CF_\CR)$, then we would have
$\CR_1(v) \in \CF_\CR$, a contradiction since $\CR(v) \cap \CR_1(v) =
\emptyset$.

In the reverse direction, let $\CR''$ be the graph obtained by
changing all adjacencies between non-neighbours of $v$. Again $\CR''
\cong \CR$, and we can pick an isomorphism $g$ from $\CR$ to $\CR''$ which
fixes $v$. Now $g$ changes infinitely many adjacencies at all
non-neighbours of $v$ (and none at $v$ or its neighbours), so $g \not \in \Aut_3(\CR)$. Also, if
$w$ is a non-neighbour of $v$, then $\CR(v) \cap \CR(w)^g = \CR(v)
\cap \CR(w^g)$, so $g \in \Aut(\CF_\CR)$.

(c) Note that any non-identity finitary permutation belongs to $\Aut_3(\CR)
\setminus \Aut_2(\CR)$. For if $g$ moves $v$, then $g$ changes infinitely
many adjacencies at $v$ (namely, all $v$ and $w$, where $w$ is
adjacent to $v$ but not $v^g$ and is not in the support of $g$). On
the other hand, if $g$ fixes $v$, then $g$ changes the adjacency of $v$ and $w$
only if $g$ moves $w$, and there are only finitely many such $w$.  

Finally, if $g \in \FSym(\CR )$, then $\CR(v)^g$ differs only finitely from $\CR(v)$,
for any vertex $v \in V$; so $g \in \Aut(\CF_\CR)$.

Thus the left inclusion is proper: $\Aut_2(\CR)$ is contained in the right-hand side but
intersects $\FSym(\CR)$ in $\{1\}$.

(d) The graph $\CR'$ in the proof of (b) is obtained from $\CR$ by
switching with respect to the set $\{v\}$; so the permutation $g$ belongs to
the group $\CS(\CR)$ of switching automorphisms. Thus $S(R) \not \leq
\Aut(\CF_\CR)$.

Now any anti-automorphism $g$ of $\CR$ maps $\CR(v)$ to a set disjoint
from $\CR(v^g)$; so no anti-automorphism can belong to
$\Aut(\CF_\CR)$.  Suppose that $g \in \Aut(\CF_\CR)$ is an isomorphism
from $\CR$ to $\sigma_X(\CR)$. We may suppose that $\sigma_X$ is not the identity,
that is, $X \neq \emptyset$ and $Y = V \setminus X \neq
\emptyset$. Choose $x$ and $y$ so that $x^g \in X$ and $y^g \in Y$. Then $\CR(x)^g \bigtriangleup 
Y = \CR(x^g)$ and $\CR(y)^g \bigtriangleup X = \CR(y^g)$. Hence $\CR(x^g)
\cap \CR(x)^g \subseteq X$ and $\CR(y^g) \cap \CR(y^g) \subseteq Y$. Hence 
\[ \CR(x^g) \cap \CR(x)^g \cap \CR(y^g) \cap \CR(y)^g = \emptyset,\]
a contradiction.

\end{proof}

On the other hand, results of Laflamme, Pouzet and Sauer in
\cite{LPS} concern the hypergraph $\CH$ on the vertex set of $\CR$
whose edges are those sets of vertices which induce a copy of
$\CR$. Note that a cofinite subset of an edge is an edge. There are
three interesting groups here:
\begin{enumerate}[a)]
\item $\Aut(\CH)$;
\item $\FAut(\CH)$, the set of permutations g with the property that
there is a finite subset $S$ of $\CR$ such that for every edge $E$,
both $(E \setminus S)g$ and $(E \setminus S)g^{-1}$ are edges.
\item $\Aut^*(\CH)$, the set of permutations g with the property that, for every
edge $E$, there is a finite subset $S$ of $E$ such that $(E \setminus
S)g$ and $(E \setminus S)g^{-1}$ are edges.

\end{enumerate}

Clearly $\Aut(\CH) \leq \FAut(\CH) \leq \Aut^*(\CH)$, and a little
thought shows that all three are indeed groups. Moreover one will note
that for $\Aut^*(\CH)$ and $\FAut(\CH)$ to be groups, both conditions
on $g$ and $g^{-1}$ in their definitions are necessary. To see this,
choose an infinite clique $C \subset \CR$, and also partition $\CR$
into two homogeneous edges $E_1$ and $E_2$: for every finite disjoint
subsets of vertices $A$ and $B$ of $\CR$ there is a vertex in $E_2$
adjacent to all vertices in $A$ and not adjacent to any of the
vertices in $B$. Then it is shown in \cite{LPS} that there exists $g
\in \Sym(\CR)$ such that $Cg=A$, and $Eg$ is an edge for any edge
$E$. But clearly $(A \setminus S)g^{-1}$ is not an edge for any
(finite) $S$.

As a further remark let $\CH^*$ be the hypergraph on the vertex set of
$\CR$ whose edges are subset of the form $E\cup F$ where $E$ induces a
copy of $\CR$ and $F$ is a finite subset of $\CR$. Equivalently these
are the subsets of $\CR$ of the form $E\Delta F$ where $E$ induces a
copy of $\CR$ and $F$ is a finite subset of $\CR$ (this follows from the
fact that for every copy $E$ and finite set $F$, $E\setminus F$ is a
copy). Then observe that $Aut(\CH^*)= Aut^*(\CH)$.

\bigskip

We now provide some relationships between these LPS groups and the CT
groups.

\begin{prop}\label{prop:LPSCT}
\begin{enumerate}[a)]
\item $\Aut(\CH) < \FAut(\CH)$.
\item $\Aut_2(\CR) \leq \Aut(\CH)$ and $\Aut_3(\CR) \leq \FAut(\CH)$.
\item $\FSym(\CR) \leq  \FAut(\CH)$ but $\FSym(\CR) \cap \Aut(\CH) = 1$. 
\end{enumerate}
\end{prop}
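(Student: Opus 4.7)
The plan is to prove (b) first, since the extension-property argument driving it is reused throughout; parts (a) and (c) then follow with comparatively little extra work. For $\Aut_2(\CR) \leq \Aut(\CH)$, let $g \in \Aut_2(\CR)$ and $E$ be an edge. I verify the extension property for $Eg$ directly: given finite disjoint $A, B \subset Eg$, pull them back to $A' = Ag^{-1}$, $B' = Bg^{-1} \subset E$. Since $E$ is a copy of $\CR$ there are infinitely many $v \in E$ adjacent to every element of $A'$ and non-adjacent to every element of $B'$; since $g$ changes only finitely many adjacencies at each of the finitely many points of $A' \cup B'$, the set of $v$ at which some such adjacency is altered is finite. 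Any $v$ outside this finite set gives $v^g \in Eg$ witnessing the required adjacencies. The $\Aut_3$ inclusion is the same with a twist: for $g \in \Aut_3(\CR)$ with finite exceptional set $T$ (where $g$ may alter infinitely many adjacencies) and $T'$ the analogous set for $g^{-1}$, put $S := T \cup T'$ and apply the previous argument to the cofinite sub-edge $E \setminus S$. Points of $A', B' \subset E \setminus S$ avoid $T$, so the argument still produces the witness, hence $(E \setminus S)g$ is an edge; symmetric use of $T'$ handles $(E \setminus S)g^{-1}$, placing $g$ in $\FAut(\CH)$.

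For (c), the inclusion $\FSym(\CR) \leq \FAut(\CH)$ is easy: taking $S$ to be the support of $g$, both $g$ and $g^{-1}$ fix $E \setminus S$ pointwise, so $(E \setminus S)g = (E \setminus S)g^{-1} = E \setminus S$ is a cofinite subset of an edge and hence itself an edge. The delicate step is $\FSym(\CR) \cap \Aut(\CH) = 1$. For a non-identity finitary $g$, choose $v$ with $w := v^g \ne v$. I use the standard fact that both $\CR(w)$ and $\CR^c(w) \setminus \{w\}$ induce copies of $\CR$: when $v \not\sim w$ I select $E$ inside $\CR^c(w) \setminus \{w\}$, and when $v \sim w$ inside $\CR(w)$. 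Further removing the finitely many other points of the support of $g$ preserves the edge property (cofinite stability), and ensures $E$ contains $v$ with $E \cap \mathrm{supp}(g) = \{v\}$. Then $Eg = (E \setminus \{v\}) \cup \{w\}$, and by construction $w$ has either no neighbors (case $v \not\sim w$) or no non-neighbors (case $v \sim w$) inside $Eg$. Since every vertex of a copy of $\CR$ has both infinite degree and infinite co-degree, $Eg$ is not an edge, so $g \notin \Aut(\CH)$.

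Finally (a) is immediate: $\Aut(\CH) \leq \FAut(\CH)$ by taking $S = \emptyset$, and strictness is witnessed by any non-identity finitary permutation via (c). The central obstacle is the second half of (c); the key is engineering the edge $E$ to meet the support of $g$ exactly in $\{v\}$ while forcing $w$ into an adjacency-extreme position in $Eg$, and for this the stability of the edge property under cofinite modifications together with the Rado-ness of $\CR(w)$ and $\CR^c(w) \setminus \{w\}$ is precisely what is needed.
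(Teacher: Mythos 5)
Your proof is correct. For parts (a) and (b) it is essentially the paper's argument: the paper states (b) as a brief observation that altering finitely many adjacencies at each point preserves copies of $\CR$, and you simply make the extension-property verification explicit, with the additional care of taking $S=T\cup T'$ so that both $(E\setminus S)g$ and $(E\setminus S)g^{-1}$ are covered (and, as with the paper, the inclusion $\Aut_2(\CR)\le\Aut(\CH)$ implicitly uses that $\Aut_2(\CR)$ is closed under inverses so that edge-preservation holds in both directions). The genuine divergence is the second half of (c). The paper fixes the single edge $E=\CR(v)$ and asserts that every non-identity finitary permutation has a conjugate whose support contains $v$ and lies in $\{v\}\cup\CR(v)$, so that $Eg=E\cup\{v\}\setminus\{w\}$ has $v$ joined to all other vertices; you instead keep $g$ fixed and build the edge around it: pick a moved vertex $v$, set $w=v^g$, take the ambient copy $\CR(w)$ or $\CR^c(w)\setminus\{w\}$ according to whether $v\sim w$, and trim off the rest of the support, so that $Eg=(E\setminus\{v\})\cup\{w\}$ has $w$ dominating or isolated. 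This buys some robustness: for the paper's conjugation to be by an element of $\Aut(\CR)$ (which is what preserves membership in $\Aut(\CH)$), some moved vertex would have to be adjacent to all other support vertices, and this fails, for instance, for a transposition of two non-adjacent vertices; your two-case choice of the ambient copy handles every finitary permutation uniformly. Likewise your direct proof of $\FSym(\CR)\le\FAut(\CH)$ (take $S=\mathrm{supp}(g)$, so $(E\setminus S)g=(E\setminus S)g^{-1}=E\setminus S$ is a cofinite subset of an edge) is more self-contained than the paper's route through Proposition \ref{prop:mainfacts}(c) and part (b), while part (a) is handled exactly as in the paper.
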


\begin{proof}
(a) This follows from part (c).  \\ 

(b) If we alter a finite number of adjacencies at any point of $\CR$, the
result is still isomorphic to $\CR$. So induced copies of $\CR$ are preserved
by $\Aut_2(\CR)$.  Similarly, given an element of $\Aut_3(\CR)$, if we throw away
the vertices where infinitely many adjacencies are changed, we are in
the situation of $\Aut_2(\CR)$.  \\

(c) The first part follows from Proposition \ref{prop:mainfacts} part (c)  and part (b) above. For
the second part, choose a vertex $v$ and let $E$ be the set of
neighbours of $v$ in $\CR$ (this set is an edge of $\CH$). Now, for
any finitary permutation, there is a conjugate of it whose support
contains $v$ and is contained in $\{v\}\cup E$. Then $Eg = E \cup
\{v\} \setminus \{w\}$ for some $w$. But the induced subgraph on this
set is not isomorphic to $\CR$, since $v$ is joined to all other
vertices.
\end{proof}

We shall see later that $\Aut(\CH).\FSym(\CR) <\Aut^*(\CH)$, but we
present a bit more information before doing so. In particular we now show
that an arbitrary switching is almost a switching isomorphism.

\begin{lem} \label{lem:switchingcofinite}
Let $X \subseteq \CR$ arbitrary and $\sigma = \sigma_X$ be the
operation of switching $\CR$ with respect to $X$. Then there is a
finite set $S$ such that $\sigma(\CR \setminus S)$ is an edge of
$\CH$, namely isomorphic to the Rado graph.
\end{lem}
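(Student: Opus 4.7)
The plan is a case analysis on $X$ and $Y := \CR \setminus X$, using the extension property of $\CR$ throughout. First, if $X$ or $Y$ is finite, I take $S$ to be the finite one; since switching with respect to $X$ only modifies adjacencies with one endpoint in $X$ and one in $Y$, the induced subgraph of $\sigma_X(\CR)$ on $\CR \setminus S$ agrees with $\CR \setminus S$, and removing finitely many vertices from $\CR$ yields a copy of $\CR$. (Note $\sigma_X = \sigma_{\CR \setminus X}$, so the two sub-cases are symmetric.)

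The substantial case is when both $X$ and $Y$ are infinite. Given disjoint finite $A, B \subseteq \CR \setminus S$, I would decompose $A = A_X \cup A_Y$ and $B = B_X \cup B_Y$ by $X$-versus-$Y$ membership, and set $P := A_X \cup B_Y$ and $Q := A_Y \cup B_X$. Unwinding the switching operation, a vertex $v$ is adjacent in $\sigma$ to all of $A$ and to none of $B$ precisely when either $v \in X$ is adjacent in $\CR$ to $P$ and non-adjacent to $Q$, or $v \in Y$ is adjacent in $\CR$ to $Q$ and non-adjacent to $P$. The extension property of $\CR$ guarantees that each of these two adjacency patterns is realized by infinitely many vertices of $\CR$, so extension fails for $(A, B)$ in $\sigma_X(\CR)$ only in the \emph{defective} case where both $\{v \in X : v \text{ adj } P \text{ and not adj } Q \text{ in } \CR\}$ and $\{v \in Y : v \text{ adj } Q \text{ and not adj } P \text{ in } \CR\}$ are finite.

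A defective $(A, B)$ constrains $X$ severely: $X$ must contain, up to a finite symmetric difference, the $\CR$-type set $\{v : v \text{ adj } Q, v \text{ not adj } P\}$ and must almost avoid the dual type set, so $X$ is forced to agree up to finite with a specific Boolean combination of the neighborhoods $\CR(w)$ for $w \in A \cup B$. Since any two distinct Boolean combinations of neighborhoods in $\CR$ must differ on infinitely many vertices (again by the extension property applied to their symmetric difference), $X$ can approximate only finitely many essentially distinct parametrizations. Hence the set $V_X$ of vertices appearing in some minimal defective $(A, B)$ is finite; taking $S := V_X$, any defective configuration inside $\CR \setminus S$ would require a parameter in $V_X \subseteq S$, which is impossible, so the extension property holds on $\CR \setminus S$ and $\sigma_X(\CR) \setminus S$ is isomorphic to $\CR$.

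The main obstacle will be establishing rigorously that $V_X$ is finite; this should boil down to showing that two distinct minimal defective configurations cannot have essentially disjoint parameter sets, via a careful accounting of the forced Boolean structure of $X$ around each minimal defect and the fact that in $\CR$ the neighborhoods of distinct vertices are almost independent.
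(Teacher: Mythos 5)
Your reduction is fine up to a point: the case where $X$ or its complement is finite is handled correctly, and it is also correct that a pair $(A,B)$ whose two $\CR$-witness cells (inside $X$ and inside $Y$) are both infinite keeps a witness after deleting any finite set, so the whole burden falls on the ``defective'' pairs. The genuine gap is the structural claim you use to control them. From the defectiveness of $(A,B)$ you may only conclude that $X$ almost contains the cell $W_{\CR}(Q,P)=\{v: v \hbox{ adj } Q,\ v \hbox{ not adj } P\}$ and almost avoids the dual cell $W_{\CR}(P,Q)$; these are just two of the $2^{|A\cup B|}$ cells of the Venn diagram of the neighbourhoods of $A\cup B$, and $X$ is completely unconstrained on all the remaining (infinite) cells. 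So it is simply not true that a defective pair forces $X$ to agree up to a finite set with a Boolean combination of neighbourhoods, and the ensuing deduction --- ``$X$ approximates only finitely many templates, hence the vertex set $V_X$ of minimal defective pairs is finite'' --- does not follow (even granting finitely many templates, many different pairs, on many different vertices, could impose the same two-cell constraint). You flag the finiteness of $V_X$ yourself as the main obstacle; as it stands it is unproved, and the route you sketch for it cannot work in this form, so the proof is incomplete at its central step.

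For contrast, the paper's argument never needs to control all defective pairs. If $\sigma(\CR)$ is not already a copy of $\CR$, pick a single pair $(U,V)$ with \emph{no} witness at all in $\sigma(\CR)$; in terms of $\CR$ this says $W_{\CR}(U_X\cup V^c_X,\,U^c_X\cup V_X)\cap X=\emptyset$ and $W_{\CR}(U^c_X\cup V_X,\,U_X\cup V^c_X)\subseteq X$, where $C_X=C\cap X$, $C^c_X=C\setminus X$. Put $S=U\cup V$. Given any pair $(\bU,\bV)$ in $E=\CR\setminus S$, consider the $\CR$-witnesses of the combined pattern $(\bU_X\cup\bV^c_X\cup U^c_X\cup V_X,\ \bU^c_X\cup\bV_X\cup U_X\cup V^c_X)$: there are infinitely many, they all lie in $X$ by the displayed inclusion, and a vertex of $X$ realizing the trace pattern $(\bU_X\cup\bV^c_X,\,\bU^c_X\cup\bV_X)$ in $\CR$ is exactly a $\sigma$-witness for $(\bU,\bV)$; infinitely many of these avoid the finite set $S$. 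So the one bad pair ``polarizes'' all witnesses into $X$, and deleting just its vertices restores the extension property. If you want to salvage your approach, you would need an independent proof that $V_X$ is finite, which is likely to be at least as hard as the lemma itself.
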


\begin{proof}
For $E \subseteq \CR$ and disjoint $U,V \subseteq E$, denote by
$W_E(U,V)$ the collection of all witnesses for $(U,V)$ in $E$. Note
that if $E$ is an edge, then $W_E(U,V)$ is an edge for any such sets
$U$ and $V$. Now for $C \subseteq \CR$, denote for convenience by
$C_X$ the set $C \cap X$, and by $C^c_X$ the set $C \setminus X$.

Thus if $\sigma(\CR)$ is \textbf{not} already an edge of $\CH$, then
the Rado graph criteria regarding switching yields finite disjoint
$U,V \subseteq E$ such that both:
\begin{itemize}
\item $W_\CR(U^c_X \cup V_X, U_X \cup V^c_X) \subseteq X $
\item $W_\CR(U_X \cup V^c_X, U^c_X \cup V_X) \cap X = \emptyset $
\end{itemize}
Define $S=U \cup V$  and $E = \CR \setminus S$, we show that
$\sigma(E)$ is an edge of $\CH$. 

For this let $\bU, \bV \subseteq E$. But now we have:
\begin{eqnarray*}
\lefteqn{W_{\CR}(\bU_X \cup \bV^c_X \cup U^c_X \cup V_X, \bU^c_X \cup \bV_X \cup U_X \cup V^c_X) } \\
&& = W_{E}(\bU_X \cup \bV^c_X,  \bU^c_X \cup \bV_X )  \cap W_{\CR}(U^c_X \cup V_X, U_X \cup V^c_X)  \\
&& \subseteq X 
\end{eqnarray*}

In virtue of the Rado graph, the above first set contains infinitely
many witnesses, and thus $W_{E}(\bU_X \cup \bV^c_X, \bU^c_X \cup
\bV_X)$ is non empty. Hence $\sigma(E)$ contains a witness for
$(U,V)$, and we conclude that $\sigma(E)$ is an edge.
\end{proof}

\bigskip

The last item above shows that any graph obtained from $\CR$ by switching has a
cofinite subset inducing a copy of $\CR$. This can be formulated as
follows: Let $G$ be a graph on the same vertex set as $\CR$ and having the
same parity of the number of edges in any 3-set as $\CR$. Then $G$ has a
cofinite subset inducing $\CR$.

\bigskip

The next result is about the relation between the LPS-groups and the
reducts.

\begin{prop}\label{prop:LPSreducts}
\begin{enumerate}[a)]
\item $\CD(\CR)<  \Aut(\CH)$.
\item $\CS(\CR) \not\leq \Aut(\CH)$.
\item $\CS(\CR) \leq\Aut^*(\CH)$ 
\end{enumerate}
\end{prop}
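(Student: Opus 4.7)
My plan is to address each of the three parts of Proposition~\ref{prop:LPSreducts} in turn, saving the main difficulty for part~(b).

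\emph{Part (a).} I would first show $\CD(\CR)\leq \Aut(\CH)$. Automorphisms of $\CR$ clearly preserve sets inducing copies of $\CR$. For an anti-automorphism $g$: $g$ is an isomorphism from $\CR$ to its complement graph, so if $E$ is an edge of $\CH$ then the induced subgraph of the complement on $g(E)$ is isomorphic to $\CR|_E\cong \CR$, and taking complements again (together with the fact that $\CR$ is self-complementary) yields $\CR|_{g(E)}\cong \CR$, so $g(E)\in \CH$. For the strict inclusion, Proposition~\ref{prop:LPSCT}(b) gives $\Aut_2(\CR)\leq \Aut(\CH)$; since any anti-automorphism changes every adjacency and hence infinitely many at every vertex, one has $\Aut_2(\CR)\cap \CD(\CR)=\Aut(\CR)$. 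Any element of $\Aut_2(\CR)\setminus \Aut(\CR)$, which exists by Proposition~\ref{prop:mainfacts}(a), is therefore a witness in $\Aut(\CH)\setminus \CD(\CR)$.

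\emph{Part (c).} Let $g\in \CS(\CR)$ be witnessed by a switching set $X$, so that $g$ is an isomorphism $\CR\to \CR':=\sigma_X(\CR)$, and let $E$ be an edge of $\CH$. The induced subgraph $\CR'|_{g(E)}$ is a copy of $\CR$, and $\CR|_{g(E)}$ is obtained from it by switching with respect to $X\cap g(E)$. Applying Lemma~\ref{lem:switchingcofinite} to this copy of $\CR$ and the set $X\cap g(E)$ yields a finite $S'\subseteq g(E)$ for which $\CR|_{g(E)\setminus S'}$ is a copy of $\CR$; setting $S:=g^{-1}(S')\subseteq E$, the image $g(E\setminus S)=g(E)\setminus S'$ is an edge of $\CH$. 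The same argument applied to $g^{-1}$, which is itself a switching isomorphism since switching is involutive, handles the symmetric condition.

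\emph{Part (b).} The strategy is to exhibit a specific $g\in \CS(\CR)\setminus \Aut(\CH)$. Fix a vertex $u$ and put $F:=\CR^c(u)$; the vertex $u$ is isolated in $\CR|_F$, so $F\notin \CH$. I would then choose a switching set $X\subseteq V$ with $u\notin X$ satisfying two conditions: (i) $\sigma_X(\CR)\cong \CR$, so that a switching isomorphism $g:\CR\to \CR':=\sigma_X(\CR)$ exists, and (ii) $\CR'|_F\cong \CR$. Condition (ii) is equivalent to $\sigma_{X\cap F}(\CR|_F)\cong \CR$: setting $X_0:=X\cap (\CR^c(u)\setminus\{u\})$, switching the disconnected graph $\CR|_F$ by $X_0$ makes $u$ acquire $X_0$ as neighbourhood, while the graph on $\CR^c(u)\setminus\{u\}$ becomes $\sigma_{X_0}(\CR|_{\CR^c(u)\setminus\{u\}})$, so (ii) requires that this combined graph be Rado. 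Given such $X$, the set $E:=g^{-1}(F)$ is an edge of $\CH$ (because $g^{-1}:\CR'\to \CR$ is an isomorphism and $F$ is Rado-inducing in $\CR'$), while $g(E)=F\notin \CH$; hence $g\notin \Aut(\CH)$. The main obstacle is arranging (i) and (ii) simultaneously: I would split $X=X_0\cup X_1$ with $X_1\subseteq \CR(u)$, take each of $X_0, X_1$ as a generic Rado-inducing subset with Rado-inducing complement in its respective universe, and derive both conditions via standard extension arguments using the richness of the Rado graph.
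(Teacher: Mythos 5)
Parts (a) and (c) of your proposal are correct and essentially coincide with the paper's own proof. In (a) the containment via self-complementarity is the same; for strictness the paper invokes high transitivity of $\Aut(\CH)$ versus $\CD(\CR)$, whereas you use $\Aut_2(\CR)\cap\CD(\CR)=\Aut(\CR)$ together with $\Aut(\CR)<\Aut_2(\CR)\leq\Aut(\CH)$ -- a perfectly good variant. Part (c) is exactly the paper's argument: apply Lemma~\ref{lem:switchingcofinite} to the copy $\sigma_X(\CR)|_{Eg}$ of $\CR$ with switching set $X\cap Eg$, pull the finite exceptional set back by $g^{-1}$, and repeat for $g^{-1}$ (which lies in $\CS(\CR)$ since it is an isomorphism from $\CR$ to $\sigma_{Xg^{-1}}(\CR)$); just note at the end that you must take the union of the two finite sets, using that cofinite subsets of edges are edges.

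Part (b) is where you genuinely diverge from the paper, and it is also where your argument has a gap. Your reduction is sound: if $u\notin X$, $\sigma_X(\CR)\cong\CR$ and $\sigma_X(\CR)|_F\cong\CR$ for $F=\CR^c(u)$, then $g^{-1}(F)$ is an edge of $\CH$ carried by $g$ onto the non-edge $F$, so $g\in\CS(\CR)\setminus\Aut(\CH)$. But the existence of such an $X$ is the whole content of the statement, and the condition you propose to secure it -- that $X_0$ be ``a Rado-inducing subset with Rado-inducing complement'' of $\CR^c(u)\setminus\{u\}$ -- is not sufficient. For (ii) you need every finite type over $F\setminus\{u\}$, computed in the \emph{switched} graph, to be realized both inside $X_0$ (these vertices become the neighbours of $u$) and outside $X_0$; equivalently, $X_0$ must meet and avoid every witness set $W_{F\setminus\{u\}}(U,V)$, and condition (i) imposes a similar splitting requirement on witness sets of $\CR$. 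A set can induce a copy of $\CR$, have a Rado-inducing complement, and still swallow a witness set whole: take $X_0$ to contain a vertex $w$ together with all of its neighbours in $F\setminus\{u\}$ and a suitable portion of its non-neighbours; then in $\sigma_{X_0}(\CR)|_F$ no vertex is non-adjacent to both $u$ and $w$, so (ii) fails. So ``generic'' has to be made precise as this splitting condition and established by an explicit enumeration/recursion -- doable, but it is exactly the argument you have not written (compare the proof of Proposition~\ref{prop:srfaut}, which carries out such a recursion in full). The paper instead sidesteps all of this with a concrete choice: fix vertices $p,q$, work in $\CR\setminus\{p\}$, switch with respect to $C$ (the vertices joined to $q$ but not $p$), and take $E=\{q\}\cup B\cup C$; the auxiliary vertex $p$ then produces all required witnesses in two short case analyses. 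Your route is more conceptual but needs that missing construction; the paper's is shorter because the switching set is definable from $p$ and $q$.
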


\begin{proof} 
(a) Clearly $\CD(\CR) \leq \Aut(\CH)$ since $\CR$ is
self-complementary. We get a strict inequality since $\Aut(\CH)$ is
highly transitive (since $\Aut_2(\CR) \leq \Aut(\CH)$) while
$\CD(\CR)$ is not.

(b)We show that $\CR$  can be switched into a graph isomorphic to $\CR$ in such a
way that some induced copy $E$ of $\CR$ has an isolated vertex after switching. Then
the isomorphism is a switching-automorphism but not an automorphism of $\CH$.

Let $p$, $q$ be two vertices of $\CR$. The graph we work with will be
$\CR_1 = \CR \setminus \{p\}$, which is of course isomorphic to $\CR$. Let
$A, B, C, D$  be the sets of vertices joined to $p$ and $q$, $p$ but not $q$, $q$ but
not $p$, and neither $p$ nor $q$, respectively. Let $\sigma$  be the operation of
switching $\CR_1$ with respect to $C$, and let $E = \{q\} \cup B \cup C$. It is clear
that, after the switching $\sigma$, the vertex $q$ is isolated in $E$. So we have
to prove two things: 

\begin{claim}
$E$ induces a copy of $\CR$.  
\end{claim}

\begin{proof}
Take $U$, $V$ to be finite disjoint subsets of $E$. We may assume
without loss of generality that $q \in U \cup V$ .  

{\bf Case 1:} $ q \in U$. Choose a witness $z$ for $(U, V \cup
\{p\})$ in $\CR$. Then $z \not \sim p$ and $z \sim q$, so $z \in C$;
thus $z$ is a witness for $(U,V)$ in $E$.

{\bf Case 2:} $q \in V$ . Now choose a witness for $(U \cup \{p\},V)$
in $\CR$; the argument is similar.  
\end{proof}

\begin{claim}
$\sigma(\CR_1)$ is isomorphic to $\CR$.
\end{claim}

\begin{proof}
Choose $U, V$ finite disjoint subsets of $\CR \setminus \{p\}$. Again,
without loss, $q \in U \cup V$. Set $U_1 = U \cap C$, $U_2 = U
\setminus U_1$, and $V_1 = V \cap C$, $V_2 = V \setminus V_1$. 

{\bf Case 1:} $q \in U$, so $q \in U_2$. Take $z$ to be a witness for
$(U_2 \cup V_1 \cup \{p\}, U_1 \cup V_2)$ in $\CR$. Then $z \sim p, q$,
so $z \in A$. The switching $\sigma$ changes its adjacencies to $U_1$
and $V_1$, so in $\sigma(\CR_1)$ it is a witness for $(U_1 \cup U_2, V_1
\cup V_2)$.  

{\bf Case 2:} $q \in V$, so $q \in V_2$. Now take $z$ to be a witness
for $(U_1 \cup V_2, U_2 \cup V_1 \cup \{p\})$ in $\CR$. Then $z \sim
q$, $z \not\sim p$, so $z \in C$, and $\sigma$ changes its adjacencies
to $U_2$ and $V_2$, making it a witness for $(U_1 \cup U_2, V_1 \cup V_2)$.
\end{proof}

(c) Let $X \subseteq \CR$, $\sigma$ be the operation of switching
$\CR$ with respect to $X$, and $g: \CR \rightarrow \sigma(\CR)$ an
isomorphism.  In order to show that $g \in Aut(\CH^*)$ we need to show
that if $E$ is an edge of $\CH$ there is some finite $S$ such that
$(E\setminus S)g$ and $(E\setminus S)g^{-1}$ are edges of $\CH$.

However the graph $Eg$ (in $\CR$) is obtained from switching the graph
induced by $\sigma(\CR)$ on $Eg$. Since the latter is a copy of
$\mathcal R$, Lemma \ref{lem:switchingcofinite} yields a finite $S_0
\subset \CR$ such that $Eg \setminus S_0$ is an edge of $\CH$. If $S_1= S_0 g^{-1}$, then 
$(E \setminus S_1)g$ is an edge of $\CH$. 

Finally notice that $g^{-1}$ is an isomorphism from $\CR$ to
$\sigma_{Xg^{-1}}(\CR)$, the above argument shows that there is a
finite $S_2$ such that $(E \setminus S_2)g^{-1}$ is an edge of $\CH$.
Since cofinite subsets of edges are edges $S:=S_1\cup S_2$ has the
required property.
\end{proof}

\begin{cor} \label{cor:br}
$\CB(\CR) < \Aut^*(\CH)$
\end{cor}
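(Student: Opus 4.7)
The plan is to get the inclusion $\CB(\CR)\le \Aut^*(\CH)$ directly from the previous proposition, and then to verify strictness by exhibiting an element of $\Aut^*(\CH)$ that is not in $\CB(\CR)$.

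For the inclusion, recall that $\CB(\CR)=\CD(\CR).\CS(\CR)$ is by definition the subgroup of $\Sym(\CR)$ generated by the union of $\CD(\CR)$ and $\CS(\CR)$. From Proposition \ref{prop:LPSreducts}(a) we have $\CD(\CR)<\Aut(\CH)\le \Aut^*(\CH)$, and from Proposition \ref{prop:LPSreducts}(c) we have $\CS(\CR)\le \Aut^*(\CH)$. Since $\Aut^*(\CH)$ is a group, it must contain the subgroup generated by these two subsets, namely $\CB(\CR)$.

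For strictness, I would invoke Proposition \ref{prop:LPSCT}(c), which tells us $\FSym(\CR)\le \FAut(\CH)\le \Aut^*(\CH)$. On the other hand, $\FSym(\CR)$ is not contained in $\CB(\CR)$: indeed, any overgroup of $\FSym(\CR)$ in $\Sym(\CR)$ is highly transitive (for any two equal-length tuples of distinct vertices there is a finitary permutation taking one to the other), whereas $\CB(\CR)$ is only $3$-transitive, as noted in the introduction. Hence $\Aut^*(\CH)$ contains finitary permutations which are not in $\CB(\CR)$, giving $\CB(\CR)<\Aut^*(\CH)$.

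No real obstacle is expected, since both halves are immediate from the results already in hand; the only point to be careful about is the separation argument, where one should record why $\FSym(\CR)\not\le \CB(\CR)$ (the $3$-transitive-but-not-$4$-transitive behaviour of $\CB(\CR)$ recalled in the introduction). Alternatively, one can package the strictness through Lemma \ref{lem:highlytransitive}: since $\Aut^*(\CH)$ contains $\FSym(\CR)$ it is not contained in $\CB(\CR)$, and then Lemma \ref{lem:highlytransitive} gives that $\Aut^*(\CH)$ is highly transitive while $\CB(\CR)$ is not, forcing the inclusion to be proper.
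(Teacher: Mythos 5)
Your proof is correct and takes essentially the same route as the paper: the inclusion $\CB(\CR)\le\Aut^*(\CH)$ from Proposition \ref{prop:LPSreducts}(a),(c) together with the fact that $\CB(\CR)$ is generated by $\CD(\CR)$ and $\CS(\CR)$, and strictness from $\Aut^*(\CH)$ being highly transitive while $\CB(\CR)$ is only $3$-transitive. The only cosmetic difference is your witness for high transitivity: you use $\FSym(\CR)\le\FAut(\CH)\le\Aut^*(\CH)$ (or Lemma \ref{lem:highlytransitive}), whereas the paper cites $\Aut_2(\CR)\le\Aut^*(\CH)$; both are valid.
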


\begin{proof}
That $\CB(\CR) \leq \Aut^*(\CH)$ follows from parts (a) and (c) of
Proposition \ref{prop:LPSreducts}.

We get a strict inequality since $\Aut^*(\CH)$ is
highly transitive (since $\Aut_2(\CR) \leq \Aut^*(\CH)$) while
$\CD(\CR)$ is not.
\end{proof}

\medskip

\begin{prop}\label{prop:srfaut}
$\CS(\CR) \not \leq \FAut(\CH)$
\end{prop}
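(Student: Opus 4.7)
The plan is to exhibit $g \in \CS(\CR) \setminus \FAut(\CH)$ by constructing a switching set $X$ that manufactures infinitely many isolated-vertex obstructions, one in each of countably many edges of $\CH$. Partition
\[
V(\CR) \;=\; V_0 \;\sqcup\; \bigsqcup_{n \geq 1} E_n,
\]
where $V_0$ is a homogeneous edge of $\CH$ (i.e., for every finite disjoint $A,B \subseteq V(\CR)$ some $z \in V_0$ satisfies $z \sim A$ and $z \not\sim B$) and each $E_n$ is an edge. Such a partition exists by starting from the standard splitting of $\CR$ into two homogeneous edges and subdividing one side into countably many copies of $\CR$. Pick distinct $v_n \in E_n$ and set $X := \bigsqcup_{n \geq 1}(\CR(v_n) \cap E_n)$, so that $V_0 \subseteq X^c$ and $v_n \in X^c$ for every $n$.

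A short calculation shows each $v_n$ is isolated in $\sigma_X(\CR)|_{E_n}$: the $\sigma_X$-neighbourhood of $v_n$ equals $\CR(v_n) \bigtriangleup X$, and restricting to $E_n$ and using that the $E_m$ are pairwise disjoint yields $(\CR(v_n) \cap E_n) \bigtriangleup (X \cap E_n) = \emptyset$ by design. The substantive step is to check that $\sigma_X(\CR) \cong \CR$, so that an isomorphism $g: \CR \to \sigma_X(\CR)$ genuinely exists and $g \in \CS(\CR)$. Given finite disjoint $U,V \subseteq V(\CR)$, the homogeneity of $V_0$ produces $z \in V_0$ with $z \sim_\CR (U \cap X^c) \cup (V \cap X)$ and $z \not\sim_\CR (U \cap X) \cup (V \cap X^c)$. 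Because $V_0 \subseteq X^c$, a direct case split on membership of each point of $U \cup V$ in $X$ versus $X^c$ confirms that $z$ is a $\sigma_X$-witness for $(U,V)$. This is the delicate step: placing $V_0$ entirely in $X^c$ is what makes the same reservoir of homogeneous witnesses work uniformly, regardless of how $U \cup V$ splits across $X$.

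Finally, fix any finite $S \subseteq V(\CR)$ and pick $n$ with $v_n \notin S$. Then $v_n \in E_n \setminus S$ remains isolated in $\sigma_X(\CR)|_{E_n \setminus S}$, since removing vertices cannot create new neighbours. Because $g^{-1}: \sigma_X(\CR) \to \CR$ is an isomorphism, the induced subgraph $\CR|_{(E_n \setminus S) g^{-1}}$ inherits an isolated vertex and so is not isomorphic to $\CR$; in other words $(E_n \setminus S) g^{-1}$ is not an edge of $\CH$. Hence $g \notin \FAut(\CH)$, establishing $\CS(\CR) \not\leq \FAut(\CH)$.
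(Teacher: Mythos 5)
Your proof is correct, and it takes a genuinely different route from the paper's. The paper proves Proposition \ref{prop:srfaut} by an explicit bookkeeping recursion: it builds sets $A,B,C,D$ simultaneously (listing all pairs $(U,V)$ with infinite repetition) so that the edges $E_n=\{a_k:k\geq n\}\cup\{b_n\}\cup\{c_k:k\geq n\}$ each acquire an isolated vertex $b_n$ after switching with respect to $C$, while witnesses deposited in $D$ (disjoint from $C$) guarantee $\sigma_C(\CR)\cong\CR$. You achieve the same two goals ``softly'': you take pairwise disjoint induced copies $E_n$ plus a homogeneous set $V_0$, switch with respect to $X=\bigcup_n(\CR(v_n)\cap E_n)$ so that each $v_n$ is isolated in $\sigma_X(\CR)\restriction E_n$ (your symmetric-difference computation is right, using $v_n\notin X$ and disjointness of the $E_m$), and use $V_0\subseteq X^c$ as a uniform reservoir of witnesses to verify the extension property of $\sigma_X(\CR)$ --- structurally the same role as the paper's $D$, but obtained without interleaving the construction of the obstruction edges with the construction of the switching set. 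The endgame (for any finite $S$ pick $n$ with $v_n\notin S$, transport the isolated vertex through $g^{-1}$, conclude $(E_n\setminus S)g^{-1}$ is not an edge, hence $g\notin\FAut(\CH)$) is identical in both arguments and is stated correctly, including the direction of the quantifiers in the definition of $\FAut(\CH)$. What your approach buys is modularity and a shorter verification; what it costs is the input fact that $\CR$ admits pairwise disjoint induced copies $E_1,E_2,\dots$ together with a homogeneous set $V_0$ avoiding them, which you assert but do not prove. That fact is true and routine (a dovetailing construction assigning unused witnesses to the appropriate part, exactly in the spirit of the paper's own recursion, or iterated use of the standard splitting into two homogeneous halves), and it is quoted at the same level of rigor as the paper's own unproved partitions (e.g.\ in Proposition \ref{prop:faut*h}); still, a one- or two-line justification would make your write-up self-contained. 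Note also that you never need the pieces to exhaust the vertex set, so ``partition'' can be weakened to ``pairwise disjoint subsets,'' which simplifies the existence argument further.
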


In view of $\CS(\CR) \leq \Aut^*(\CH)$ (by Proposition
\ref{prop:LPSreducts}), this yields the following
immediate Corollary.

\begin{cor}\label{cor:autfsym}
$\FAut(\CH) < \Aut^*(\CH)$. 
\end{cor}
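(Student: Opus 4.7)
The plan is to construct explicitly an element $g \in \CS(\CR)$ that fails the defining condition of $\FAut(\CH)$, by extending the one-isolated-vertex construction of Proposition \ref{prop:LPSreducts}(b) to infinitely many simultaneous obstructions spread over pairwise disjoint edges.

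First I will partition $V(\CR)$ as a disjoint union $Y \sqcup E_1 \sqcup E_2 \sqcup \cdots$, where $Y$ is Rado-thick (meaning that for every finite disjoint $P, Q \subseteq V(\CR)$ some $w \in Y$ is adjacent in $\CR$ to every element of $P$ and to no element of $Q$) and each $E_n$ induces a copy of the Rado graph. This uses the extension property of $\CR$ to split $V(\CR)$ into two Rado-thick halves and then further partition one half into countably many disjoint induced copies of $\CR$. In each $E_n$ I fix a distinguished vertex $q_n$, set $D_n = \CR(q_n) \cap E_n$, and define the switching set $X = Y \sqcup \bigsqcup_n D_n$. Because the $E_n$ are disjoint from $Y$ and from each other, one checks immediately that $X \cap E_n = D_n$ and $q_n \notin X$; the local switching calculation from the proof of Proposition \ref{prop:LPSreducts}(b) then shows that $q_n$ has all its adjacencies inside $E_n$ flipped by $\sigma_X$ and hence is isolated in the induced subgraph of $\sigma_X(\CR)$ on $E_n$.

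Second I will verify that $\sigma_X(\CR) \cong \CR$, so that an isomorphism $g : \CR \to \sigma_X(\CR)$ exists and lies in $\CS(\CR)$. The key point is that $X \supseteq Y$ is Rado-thick: given finite disjoint $A, B \subseteq V(\CR)$, the sets $A' = (A \cap X) \cup (B \setminus X)$ and $B' = (B \cap X) \cup (A \setminus X)$ are still finite and disjoint, so some $w \in Y$ witnesses $(A', B')$ in $\CR$, and a direct check confirms that this $w$ witnesses $(A, B)$ in $\sigma_X(\CR)$.

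Third, to show $g \notin \FAut(\CH)$, I will assume toward contradiction that a finite $S \subseteq V(\CR)$ satisfies the $\FAut(\CH)$ condition and pick $n$ with $q_n \notin S$; this is possible because the $q_n$ are infinite in number and $S$ is finite. The edge $E_n$ then yields the contradiction: the induced subgraph of $\sigma_X(\CR)$ on $E_n \setminus S$ still contains $q_n$ as an isolated vertex, and its isomorphic image under $g^{-1}$ is the induced subgraph of $\CR$ on $(E_n \setminus S)g^{-1}$, which therefore has $g^{-1}(q_n)$ isolated and so is not a copy of $\CR$. The main obstacle is the simultaneous construction in the first step---arranging Rado-thickness of $Y$, the pairwise disjointness of the $E_n$, and the Rado property of each $E_n$ together---but once this partition is in place the switching calculation and the final counting argument are straightforward.
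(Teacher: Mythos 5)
Your argument is correct, and it reaches the corollary by the same overall strategy as the paper (exhibit an element of $\CS(\CR)\setminus\FAut(\CH)$ and invoke $\CS(\CR)\le\Aut^*(\CH)$ together with $\FAut(\CH)\le\Aut^*(\CH)$ — do state that last inference explicitly, since it is where $\Aut^*(\CH)$ actually enters), but the construction itself is genuinely different from the paper's. The paper proves Proposition \ref{prop:srfaut} by one interleaved recursion producing sequences $A,B,C,D$ with \emph{nested} tail-edges $E_n=\{a_k:k\ge n\}\cup\{b_n\}\cup\{c_k:k\ge n\}$, switching with respect to $C$, and building the witnesses that make $\sigma(\CR)\cong\CR$ into the same recursion via the reservoir $D$; for a given finite $S$ it then picks $n$ with $E_n\cap S=\emptyset$. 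You instead fix a \emph{static} partition of the vertex set into a Rado-thick set $Y$ and pairwise disjoint edges $E_n$, switch with respect to $X=Y\cup\bigcup_n(\CR(q_n)\cap E_n)$, and your two verifications are clean local computations: $X\cap E_n=\CR(q_n)\cap E_n$ with $q_n\notin X$ isolates $q_n$ in $\sigma_X(\CR)\restriction E_n$ (this is exactly the mechanism of Proposition \ref{prop:LPSreducts}(b), repeated disjointly), and witnesses taken in $Y\subseteq X$ for the adjusted pair $(A',B')$ show $\sigma_X(\CR)\cong\CR$. Your choice of $n$ with merely $q_n\notin S$ also suffices, since deleting vertices cannot un-isolate $q_n$, which is marginally simpler than requiring $E_n\cap S=\emptyset$. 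The trade-off is that your first step — partitioning $\CR$ into a Rado-thick set and countably many disjoint induced copies — is a standard but not free fact (it needs essentially the same kind of enumeration/bookkeeping argument that the paper folds into its single recursion), whereas once it is granted your proof is more modular and transparent: the switching set, the isolation, and the isomorphism $\sigma_X(\CR)\cong\CR$ are verified independently rather than maintained simultaneously through a recursive construction.
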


Clearly we have the following immediate observation:

\begin{obs} \label{obs:faut}
\[ \Aut(\CH).\FSym(\CR) \leq  \FAut(\CH) \]
\end{obs}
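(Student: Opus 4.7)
The plan is to observe that this is essentially a one-line consequence of facts already established. The subgroup $\Aut(\CH).\FSym(\CR)$ is by definition the smallest subgroup of $\Sym(\CR)$ containing both $\Aut(\CH)$ and $\FSym(\CR)$. So it suffices to show that each of these two groups is contained in $\FAut(\CH)$, and then invoke the fact (noted earlier in the paper) that $\FAut(\CH)$ is itself a group.

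First, I would note $\Aut(\CH) \leq \FAut(\CH)$ directly from the definitions: any $g \in \Aut(\CH)$ sends edges to edges, so the condition defining $\FAut(\CH)$ is satisfied trivially with the finite set $S = \emptyset$ (for every edge $E$, both $Eg$ and $Eg^{-1}$ are edges). Second, $\FSym(\CR) \leq \FAut(\CH)$ is exactly what is established in Proposition \ref{prop:LPSCT}(c). Combining these two inclusions with the fact that $\FAut(\CH)$ is closed under products and inverses gives $\Aut(\CH).\FSym(\CR) \leq \FAut(\CH)$.

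There is no real obstacle here, which is why the authors label this as an Observation rather than a Proposition. The only content is the recollection that both $\Aut(\CH)$ and $\FSym(\CR)$ have already been shown to sit inside $\FAut(\CH)$, plus the closure of $\FAut(\CH)$ under the group operations. The more substantive corresponding question, which the paper addresses next via Proposition \ref{prop:srfaut} and Corollary \ref{cor:autfsym}, is whether this inclusion is strict; but that goes beyond what is asserted in the observation itself.
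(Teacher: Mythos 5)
Your proof is correct and is essentially the argument the paper has in mind: the paper states the observation as immediate, relying precisely on the already-noted inclusions $\Aut(\CH)\leq\FAut(\CH)$ and $\FSym(\CR)\leq\FAut(\CH)$ (Proposition \ref{prop:LPSCT}(c)) together with the fact that $\FAut(\CH)$ is a group.
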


Hence, the Corollary yields yet that $\Aut(\CH).\FSym(\CR) < \Aut^*(\CH)$.

\begin{proof} (of Proposition \ref{prop:srfaut}).
The argument can be thought of as an infinite version of the one given
in part b) of Proposition \ref{prop:LPSreducts}.

We shall recursively define subsets of $\CR$:
\begin{itemize}
\item $A = \langle a_n : n \in \BN \rangle $
\item $B = \langle b_n : n \in \BN \rangle $
\item $C = \langle c_n : n \in \BN \rangle $
\end{itemize}
and set $D$ so that:
\begin{enumerate}
\item $ \forall n \forall k \leq n \; a_n \not\sim b_k \mbox{ and } c_n \sim b_k$
\item $ \forall n \; E_n := \{ a_k : k \geq n\} \cup \{b_n\} \cup \{c_k : k \geq n\}$ is an edge.
\item If $\sigma$ is the operation of switching $\CR$ with respect to $C$, then $\sigma(\CR)$ is isomorphic to $\CR$. 
\item $D = \CR \setminus (A \cup B \cup C)$ is infinite.
\end{enumerate}

\begin{center}
\includegraphics[width=4in]{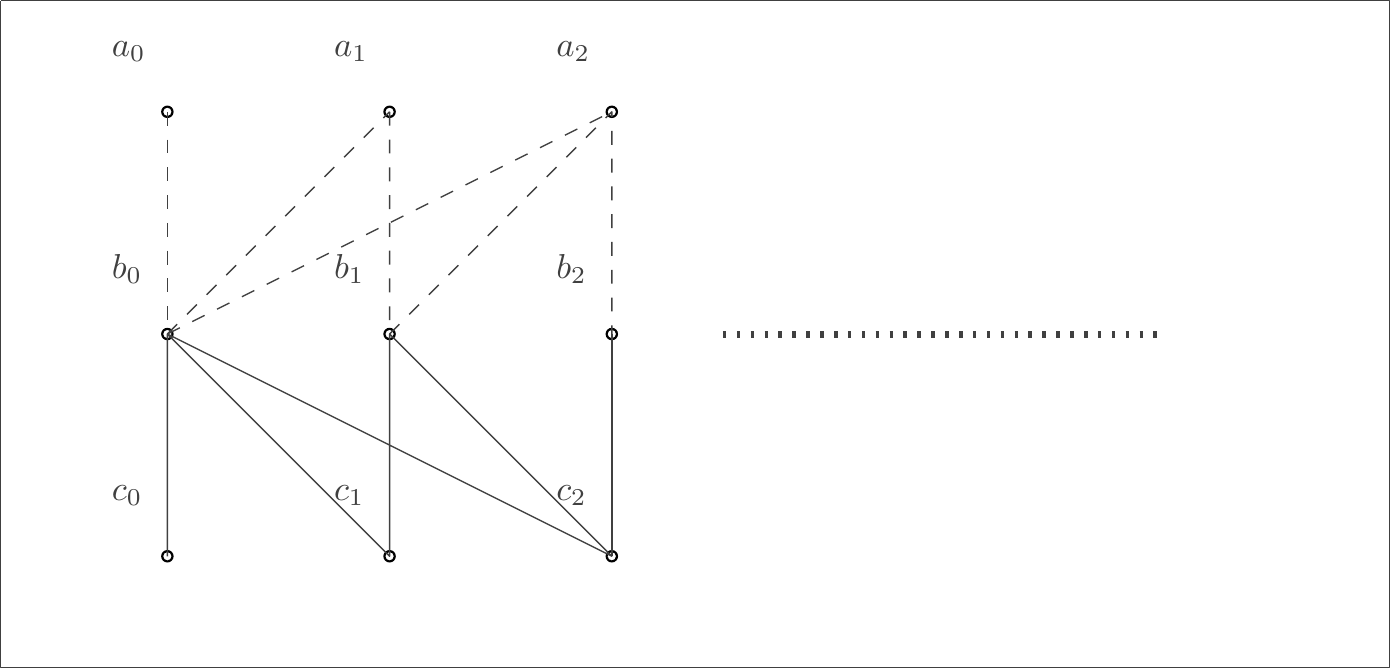}
\end{center}

The construction is as follows. First list all pairs $(U,V)$ of
disjoint finite subsets of $\CR$ so that each one reoccurs infinitely
often. Start with $A=B=C=D= \emptyset$ and at stage $n$, assume we
have constructed $A_n=\{ a_k : k \leq n\}$, $B_n = \{b_k: k \leq n
\}$, and $C_n= \{c_k : k \leq n\}$ satisfying condition 1) above,
together with a finite set $D$ disjoint from $A_n$, $B_n$ and
$C_n$. Then given $(U,V)$, proceed following one of the following
cases:

\begin{enumerate}[a)]
\item Suppose $U \cup V  \subseteq  A_n \cup B_n \cup C_n$ and
contains at most one $b_i$ (i.e. $(U,V)$ is a type candidate for the
eventual $E_i$). Then, choosing from $\CR \setminus ( A_n \cup B_n \cup C_n \cup D)$, add $a_{n+1}$
or $c_{n+1}$ as a witness for $(U,V)$ depending as to whether $b_i$ is
in $V$ or $U$ (add $a_{n+1}$ if there is no such $b_i$ at all).  Then
choose two more elements from $\CR \setminus D$ to complete the
addition of elements $a_{n+1}$, $b_{n+1}$, and $c_{n+1}$ as required
by condition 1). Also throw a new point in $D$ just to ensure it will become infinite.
\item Else add the elements of $U \cup V  \setminus  A_n \cup B_n \cup C_n$ to $D$, 
 and select an element of $\CR \setminus ( A_n \cup B_n \cup C_n \cup
 D)$ as witness to $(U \setminus C_n \cup V \cap C_n, V \setminus C_n \cup U \cap C_n)$. 
\end{enumerate}

The construction in part b) will ensure that $\sigma(\CR)$ is
isomorphic to $\CR$. Indeed let $U$ and $V$ be disjoint finite subsets
of $\CR$, and without loss of generality $U \cap D \neq
\emptyset$. Thus when the pair $(U,V)$ is handled at some stage $n$,
part b) will add a witness $d$ in $D$ to $(U \setminus C_n \cup V \cap
C_n, V \setminus C_n \cup U \cap C_n)$. But then $d$ is a witness to
$(U,V)$ in $\sigma(\CR)$.

Let $g$ be the isomorphism from $\sigma(\CR)$ to $\CR$.  

Finally the construction in part a) clearly ensures condition
2). However note that in $\sigma(\CR)$, $b_n$ is isolated in $E_n$,
and therefore $\sigma(E_n)$ is not an edge. 

Finally for any finite set $S \subset \CR$, choose $n$ so that $S \cap
E_n = \emptyset$. Then $(E_n \setminus S)g$ is not an edge. Thus $g
\in \CS(\CR) \setminus \FAut(\CH)$.

\end{proof}

We now go back to $\Aut(\CF_\CR)$. One can readily verify that the
automorphism $g$ produced in the reverse direction of Proposition
\ref{prop:mainfacts} is in fact not in $\Aut(\CH)$, thus $\Aut(\CF_\CR)\not \leq \Aut(\CH)$. 
However we have the following.

\begin{prop}\label{prop:faut*h}

$\Aut(\CF_\CR)\not \leq \Aut^*(\CH)$. 
\end{prop}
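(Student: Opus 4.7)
The plan is to construct, in the spirit of the second part of Proposition~\ref{prop:mainfacts}(b), a permutation $g\in\Aut(\CF_\CR)\setminus\Aut^*(\CH)$ whose failure of edge--preservation survives removing any finite set. Fix a vertex $v$ and set $F=\CR^c(v)\setminus\{v\}$; since $F$ induces a copy of $\CR$, we choose an infinite set $I\subseteq F$ that is independent in $\CR$. We will build a new graph $\CR^{\ast}$ on the vertex set of $\CR$ satisfying (a) $\CR^{\ast}$ agrees with $\CR$ on every pair of vertices not both in $F$; (b) $\CR^{\ast}\cong\CR$; (c) $I$ induces a copy of $\CR$ in $\CR^{\ast}$.

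The internal structure $\CR^{\ast}|_F$ is produced by a Fra\"iss\'e back--and--forth. We enumerate the countably many requirements coming from the Rado extension property inside $\CR^{\ast}|_F$, the Rado extension property inside $\CR^{\ast}|_I$, and the joint extension needed for $\CR^{\ast}$ itself to be Rado: for every finite disjoint $U,V\subseteq\CR(v)$ and finite disjoint $U',V'\subseteq F$, some $z\in F$ should be simultaneously a $\CR$-witness for $(U,V)$ on the fixed bipartite side and a $\CR^{\ast}$-witness for $(U',V')$ inside $F$. At each step the Rado property of $\CR$ supplies infinitely many candidates for $z$, so we can always pick one free of prior adjacency assignments and freely declare its $\CR^{\ast}|_F$-adjacencies. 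Homogeneity of the resulting $\CR^{\ast}$ then yields an isomorphism $g\colon\CR\to\CR^{\ast}$ fixing $v$.

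To see $g\in\Aut(\CF_\CR)$, note that for any vertex $w$, $\CR(w)g=\CR^{\ast}(w^g)\supseteq\CR(w^g)\cap\CR(v)$, because bipartite adjacencies between $\CR(v)$ and $F$ are unchanged, and $\CR(w^g)\cap\CR(v)\in\CF_\CR$; the same argument applied to $g^{-1}$, using that the symmetric difference $\CR(w)\triangle\CR^{\ast}(w)$ is contained in $F$, gives $\CR(w)g^{-1}\supseteq\CR(w^{g^{-1}})\cap\CR(v)\in\CF_\CR$. To see $g\notin\Aut^*(\CH)$, set $E:=Ig^{-1}$; by property (c) and the isomorphism $g$, $E$ is an edge of $\CH$ in $\CR$. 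However $Eg=I$ is $\CR$-independent, so $\CR|_I$ is edgeless and not a copy of $\CR$; in particular, for every finite $S\subseteq E$, $(E\setminus S)g=I\setminus Sg$ remains an infinite independent set in $\CR$, hence not an edge of $\CH$. Thus $E$ witnesses $g\notin\Aut^*(\CH)$.

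The main technical obstacle is executing the back--and--forth for $\CR^{\ast}|_F$ so that the three extension conditions hold simultaneously while keeping $I$ genuinely $\CR$-independent and the bipartite between $F$ and $\CR(v)$ unchanged. Each condition in isolation is a standard Fra\"iss\'e amalgamation; the heart of the argument lies in interleaving them into a single consistent enumeration so that the fixed $\CR$-structure (which dictates the bipartite as well as $\CR$-adjacencies inside $I$) and the freely chosen $\CR^{\ast}|_F$-structure remain compatible at every stage.
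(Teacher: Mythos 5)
Your proposal is correct, but it takes a noticeably heavier route than the paper. The paper's proof does not require the separating permutation to be an isomorphism of $\CR$ onto anything: it fixes a vertex $v$, splits $\CR^c(v)$ into an edge $E$, an infinite independent set $D$ and a remainder, and takes \emph{any} bijection $g$ which is the identity on $\CR(v)$ and maps $E$ onto $D$. Because $g$ (and $g^{-1}$) fix $\CR(v)$ pointwise, $\CR(w)^g\supseteq\CR(v)\cap\CR(w)$ for every $w$, so membership in $\Aut(\CF_\CR)$ is immediate, while $(E\setminus S)g\subseteq D$ is independent for every finite $S$, so $g\notin\Aut^*(\CH)$ -- no back-and-forth is needed. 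You instead insist that $g$ be an isomorphism of $\CR$ onto a modified graph $\CR^{*}$ agreeing with $\CR$ on all pairs meeting $\CR(v)\cup\{v\}$, in which a prescribed $\CR$-independent set $I\subseteq\CR^c(v)\setminus\{v\}$ becomes a copy of $\CR$; this forces the Fra\"iss\'e-style construction, which is where all your technical work sits. The construction does go through: witnesses for extension requirements involving $v\in U$ or $v\notin U\cup V$ survive unchanged inside $\CR(v)$, so the only genuinely new requirements are your ``joint'' ones, and for those the $\CR$-witness set of $(U,V\cup\{v\})$ is an infinite set (indeed an edge), so fresh candidates always exist; declared adjacencies are never revisited, and the conclusion $E=Ig^{-1}$ an edge with $(E\setminus S)g=I\setminus Sg$ never an edge is exactly what non-membership in $\Aut^*(\CH)$ requires. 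Two small points worth making explicit: in the $g^{-1}$ half of the filter verification you implicitly use $\CR(v)^{g^{-1}}=\CR(v)$, which does hold but only because you arranged $g(v)=v$ and $\CR^{*}(v)=\CR(v)$; and the phrase about ``keeping $I$ genuinely $\CR$-independent'' is a non-issue, since $\CR$ itself is never altered. What your version buys is a slightly stronger witness (a filter automorphism that is moreover an isomorphism onto a graph differing from $\CR$ only inside $\CR^c(v)$, in the spirit of the reverse direction of Proposition~\ref{prop:mainfacts}(b)); what the paper's version buys is brevity, since an arbitrary bijection fixing $\CR(v)$ pointwise already does the job.
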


\begin{proof}

Fix a vertex $v \in \CR$. Now partition $\CR^c(v) = A \cup E \cup
D$, where $E$ is an edge, $D$ is an infinite independent set, and $A$ is the set of remaining vertices. This is
easily feasible since $\CR^c(v)$ is an edge. Now define $g \in
\Sym(\CR)$ such that:
\begin{enumerate}[a)]
\item $g \restriction \CR(v)$ is the identity.
\item $g \restriction E$ is a bijection to $D$.
\item $g \restriction A \cup D$ is a bijection to $\CR^c(v)$.
\end{enumerate}
Now for any vertex $w$, $\CR(w)^g \supseteq \CR(v) \cap \CR(w)$ so $g
\in \Aut(\CF_\CR)$. However, for any finite set $S$ of $E$, then $(E
\setminus S)g$ is again an independent set, and thus
certainly not an edge.

Hence $g \not \in \Aut^*(\CH)$ and the proof is complete.
\end{proof}

\section{Conclusion} \label{section:conclusion}

The following diagram summarizes the subgroup relationship between the
various groups under discussion.

\begin{center}
\includegraphics[height=3.5in]{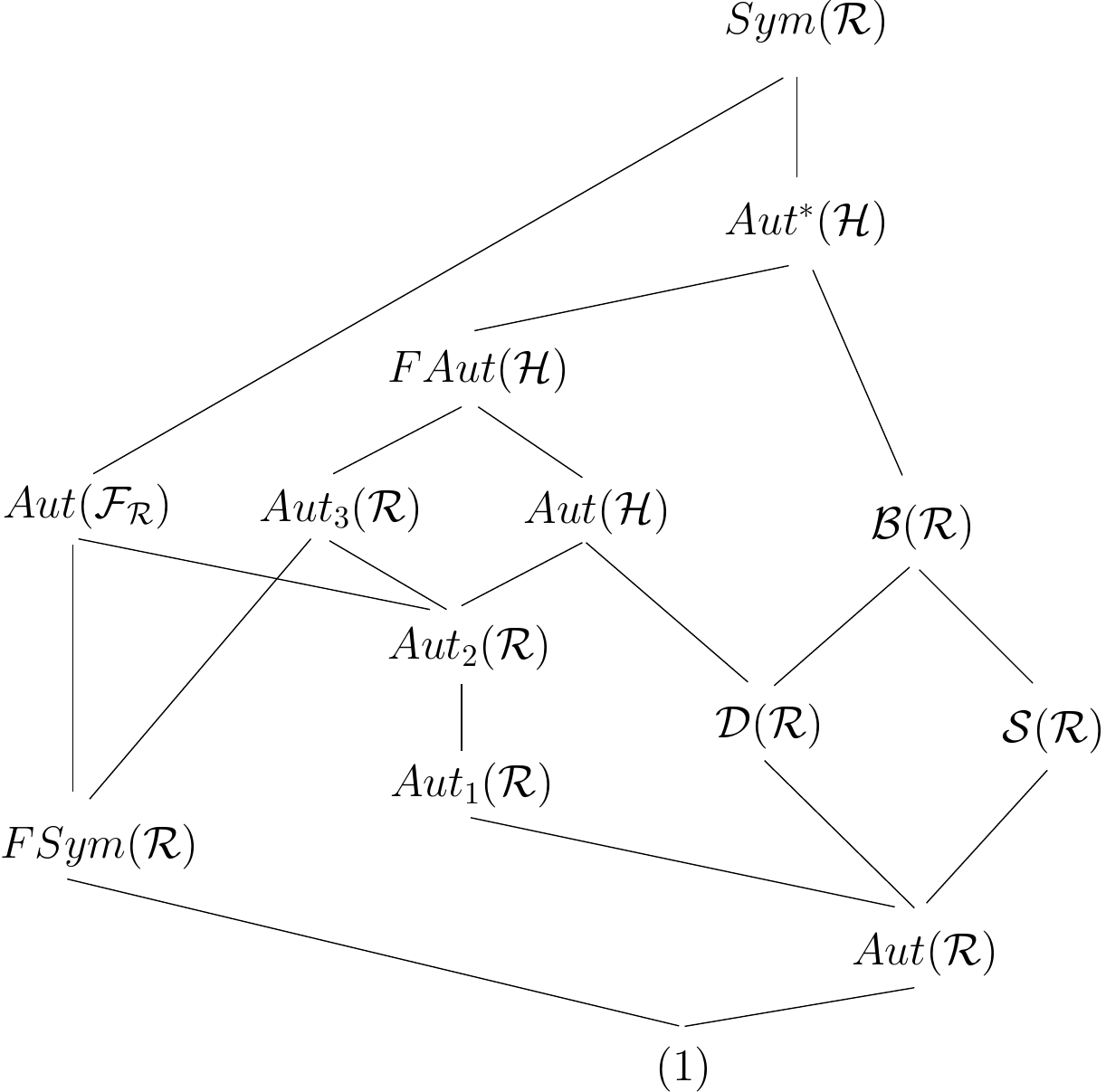}
\end{center}

We do not know if the inclusion is strict in Observation \ref{obs:faut}.

\end{document}